\documentclass[12pt,a4paper]{amsart}
\usepackage{amssymb,amsmath,fullpage}
\usepackage[english]{babel}

\usepackage{array}
\usepackage{ifthen}
\usepackage{url}
\usepackage{hyperref}
\usepackage{booktabs}

\newtheorem{theorem}{Theorem}
\newtheorem{lemma}[theorem]{Lemma}

\newtheorem{conjecture}[theorem]{Conjecture}

\theoremstyle{definition}

\numberwithin{equation}{section}
\numberwithin{theorem}{section}

\def\ZZ{\mathbb Z}
\def\RR{\mathbb R}

\renewcommand{\SS}{\mathfrak{S}}

\def\hh{{\rm h}\kern.4pt}

\date{\today}

\begin{document}
\title{On a recursively defined sequence involving the prime counting function}

\author[A. Alkan]{Altug Alkan}

\address{Altug Alkan \newline
         \indent Graduate School of Science and Engineering, \newline
        \indent Piri Reis University,\newline
         \indent Istanbul, Turkey}
         
\email{altug.alkan@pru.edu.tr}      

\author[A. R. Booker]{Andrew R. Booker}
\address{Andrew R. Booker \newline
\indent School of Mathematics, University of Bristol,\newline
\indent Woodland Road, Bristol, BS8 1UG,\newline
\indent United Kingdom}
\email{andrew.booker@bristol.ac.uk}

\author[F. Luca]{Florian Luca}

\address{Florian Luca \newline
         \indent School of Mathematics, University of the Witwatersrand, \newline
        \indent Private Bag X3, Wits 2050,\newline
         \indent Johannesburg, South Africa}
         
\address{Research Group in Algebraic Structures and Applications,\newline 
\indent King Abdulaziz University,\newline 
         \indent Jeddah, Saudi Arabia}
         
\address{Centro de Ciencias Matem\'aticas, UNAM,\newline
    \indent Morelia, Mexico}                  

\email{Florian.Luca@wits.ac.za}

\date{\today}

\pagenumbering{arabic}

\begin{abstract}
We prove some properties of the sequence $\{a_n\}_{n\ge1}$ defined by
$$a_n=\pi(n)-\pi\bigl(\textstyle\sum_{k=1}^{n-1}a_k\bigr).$$
In particular we show that it assumes every non-negative integral value
infinitely often.
\end{abstract}

\maketitle

\section{Introduction}
Let $\pi(x)=\#\{p\text{ prime}:p\le x\}$ denote the prime counting
function. In this paper we consider the sequence $\{a_n\}_{n\ge 1}$ defined by
$$
a_n=\pi(n)-\pi\!\left(\textstyle\sum_{k=1}^{n-1}a_k\right)
\quad\text{for }n\ge 1.
$$
(Here we adopt the convention that the empty sum is $0$, so
$a_1=\pi(1)-\pi(0)=0$.)
This is sequence A335294 in the OEIS \cite{oeis}, and
its initial terms are 
\begin{align*}
&0, 1, 2, 0, 1, 1, 1, 1, 0, 0, 1, 1, 2, 1, 1, 0, 1, 1, 2, 1, 1, 0, 1, 1, 1, 1, 0, 0, 1, 1, 2, 2, 1,\\
&1, 0, 0, 1, 1, 1, 1, 2, 1, 2, 2, 1, 0, 1, 1, 1, 1, 0, 0, 1, 1, 1, 1, 1, 1, 1, 1, 2, 2, 1, 1, 0, 0,\\
&1, 1, 1, 1, 2, 1, 2, 2, 1, 0, 0, 0, 1, 1, 1, 1, 2, 1, 1, 1, 1, 0, 1, 1, 1, 1, 1, 1, 0, 0, 1, \ldots
\end{align*}
At first glance, the sequence is not monotonic and displays a remarkably
slow rate of growth. In this direction, see Table~\ref{tab:Tab1}, which
shows the smallest solutions to $a_n=k$ for each $k\le 13$.
(This is a subsequence of the prime numbers for $k\ge1$;
note that for $k\in\{3,4,5,6,9,10,13\}$ the corresponding $n$ is also
the larger of a twin prime pair.)

\begin{table}[h]
\scriptsize
\begin{tabular}{c|ccccccc}
$k$&$0$&$1$&$2$&$3$&$4$&$5$&$6$\\
$n$&$1$&$2$&$3$&$229$&$3259$&$15739$&$449569$\\ \hline\hline
$k$&$7$&$8$&$9$&$10$&$11$&$12$&$13$\\
$n$&$6958841$&$130259903$&
$2404517671$&$56014949761$&$538155413969$&
$21692297487587$&$21692297487589$
\end{tabular}
\\
\caption{Smallest $n$ satisfying $a_n=k$ \cite[A335337]{oeis}}
\label{tab:Tab1}
\end{table}

Let $s_n$ denote the summatory sequence,
$$
s_n=\sum_{k=1}^na_k\quad\text{for }n\ge0.
$$
Our main result establishes some distributional properties of
$\{a_n\}_{n\ge 1}$ and $\{s_n\}_{n\ge0}$. In order to state them, we
define $g(x)$ to be the maximum distance between a number
$y\le x$ and the largest prime $p\le y$, i.e.\
$$
g(x)=\sup_{y\in[2,x]}\min\{y-p:p\le y\}
\quad\text{for }x\ge2.
$$
Note that $g$ is a continuous, piecewise linear, non-decreasing function,
and
$$
\pi(n)-\pi(n-g(n)-1)\ge1\quad\text{for all integers }n\ge2.
$$
Conjecturally one has $g(x)=O(\log^2x)$;
the best result to date, due to Baker, Harman and Pintz \cite{BHP},
is that $g(x)\le x^{21/40}$ for all sufficiently large $x$.

\begin{theorem}
\label{theorem:1}
The following conclusions hold:
\begin{itemize}
\item[(i)] $a_n\ge 0$ and $a_n-\max\{1,2\pi(a_n)\}\le a_{n+1}\le a_n+1$ for all $n\ge 1$;
\item[(ii)] $a_n=O\bigl(\sqrt{g(n)/\log g(n)}\bigr)$ for all
$n\ge5$;
\item[(iii)] for each $k\ge 0$, there are infinitely many $n$
such that $a_n=k$;
\item[(iv)] $n-g(n)\le s_n\le n-2$ for all $n\ge9$;
\item[(v)] $s_n<n-\frac12g(n)$ for infinitely many $n$.
\end{itemize}
\end{theorem}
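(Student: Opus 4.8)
Throughout write $d_n:=n-s_n$ for the ``deficit.'' Since $s_n=s_{n-1}+a_n$ and $a_n=\pi(n)-\pi(s_{n-1})$, substituting $s_{n-1}=(n-1)-d_{n-1}$ gives the master recursion
\[
d_n=d_{n-1}+1-a_n,\qquad a_n=\pi(n)-\pi(n-1-d_{n-1})=\#\{p\text{ prime}:s_{n-1}<p\le n\},
\]
so that $a_n$ counts the primes in the window $(s_{n-1},n]$, which has length $d_{n-1}+1$. All five parts are statements about $d_n$. Part~(i) is the warm-up: an easy induction gives $s_{n-1}\le n-1$ (the number of primes in $(s_{n-2},n-1]$ never exceeds the length of that interval), hence $a_n\ge0$; differencing the definition yields $a_{n+1}-a_n=(\pi(n+1)-\pi(n))-(\pi(s_n)-\pi(s_{n-1}))$, where the first term is $0$ or $1$ and the second is $\ge0$, giving $a_{n+1}\le a_n+1$, while the lower bound follows from Brun--Titchmarsh applied to the interval $(s_{n-1},s_n]$ of length $a_n$.

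For part~(iv) I would induct on $2\le d_n\le g(n)$. The upper bound is clean once one notes that $g$ is integer-valued at integers: if $d_{n-1}\ge g(n)$ then the window $(s_{n-1},n]$ contains the prime guaranteed by $\pi(n)-\pi(n-g(n)-1)\ge1$, so $a_n\ge1$ and $d_n\le d_{n-1}\le g(n)$; otherwise $d_{n-1}\le g(n)-1$ and $d_n\le d_{n-1}+1\le g(n)$. The lower bound $d_n\ge2$ is the delicate point: it can fail only when $d_{n-1}=2$ and the window $\{n-2,n-1,n\}$ carries two primes, i.e.\ when $n-2,n$ are twin primes. The plan is to use that twins force a preceding run of composites --- $n-1$ and $n-3$ are even and $n-4$ is divisible by $3$, so the previous prime is $\le n-6$ --- and to track the deficit through this forced gap to conclude $d_{n-1}\ge3$, which restores $d_n\ge2$. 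I expect this casework (probably via a strengthened inductive hypothesis near twin tops) to be the fiddliest part of the whole theorem.

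Part~(v) is where the mechanism is cleanest, so I would prove it first. Let $(p,p')$ be a maximal (record) gap and $G=p'-p$. Inside the gap one has $a_n=\pi(p)-\pi(s_{n-1})$ (the window contains no prime above $p$), so $s_n=s_{n-1}+\pi(p)-\pi(s_{n-1})$ increases monotonically and is bounded above by $p$; since $d_p\le g(p)\le G-1$ there is room for $s$ to climb all the way to $p$ before the gap ends, after which $a_n=0$ freezes $s$ at $p$. Hence $s_{p'-1}=p$ and
\[
d_{p'-1}=(p'-1)-p=G-1=g(p'-1).
\]
As record gaps occur infinitely often (prime gaps are unbounded), $d_n=g(n)>\tfrac12 g(n)$, i.e.\ $s_n<n-\tfrac12 g(n)$, for infinitely many $n$; in fact one gets the stronger $s_n=n-g(n)$ along this subsequence.

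Parts~(ii) and~(iii) carry the genuine analytic obstacles. For~(ii), since $d_{n-1}\le g(n)$ the window has length $O(g(n))$ and Brun--Titchmarsh gives only the weak bound $a_n=O(g(n)/\log g(n))$; to reach $O(\sqrt{g(n)/\log g(n)})$ I would use the slow variation from~(i): working backwards from a peak $a_n=A$ one has $a_{n-j}\ge A-j$, whence
\[
\tfrac{A(A+1)}2\le\sum_{j=0}^{A-1}a_{n-j}=s_n-s_{n-A}=A+d_{n-A}-d_n\le A+g(n),
\]
which already yields $A=O(\sqrt{g(n)})$; extracting the extra factor $1/\sqrt{\log g(n)}$ by feeding prime density back into this quadratic relation is the crux. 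For~(iii), the ``$+1$'' upper step of~(i) together with ``$a_n=0$ infinitely often'' (otherwise $d_n$ would be eventually non-increasing hence constant, forcing $a_n\equiv1$, contradicted by any sufficiently large prime gap) reduces the claim to $\limsup_n a_n=\infty$: if $a_n\ge k$ for arbitrarily large $n$, then between a preceding zero and such an $n$ the sequence must hit the value $k$. To establish $\limsup a_n=\infty$ I would exploit~(v): just after a record gap the deficit is $\approx g(n)$, so the ensuing windows are long, and a lower bound for the number of primes in them (via the prime number theorem) should force $a_n\gg g(n)/\log n\to\infty$ along the maximal gaps. Proving that lower bound on $a_n$ is the main obstacle for~(iii).
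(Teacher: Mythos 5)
Your treatment of (i), of the lower bound in (iv), and of (v) is sound; indeed your proof of (v) --- freezing $s_n$ at the left endpoint $p$ of a record gap, so that $s_{p'-1}=p$ and $n-s_n=g(n)$ exactly at $n=p'-1$ --- is cleaner and stronger than the paper's midpoint argument, and your plan for the upper bound $s_n\le n-2$ (a strengthened induction tracking the deficit backwards through the forced composite run $n-1,n-3,n-4,n-5$ below a twin pair) is the right one and can be completed; it is essentially the paper's four-step induction. The decisive gap is in (iii). Your reduction to showing $\limsup_n a_n=\infty$ is correct (and matches the paper), but the mechanism you propose for unboundedness cannot work: the prime number theorem gives no pointwise lower bound for the number of primes in an interval of length $g(n)$ near $n$ --- by the very definition of $g$, some intervals of that length contain no primes at all --- and nothing controls the primes immediately following a record gap. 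Structurally, to get $a_n\ge m$ you need $m$ primes inside the window $(s_{n-1},n]$, whose length never exceeds $g(n)+1$, \emph{and} you need the deficit to be large at precisely the moment such a cluster arrives; the dynamics of $s_n$ are not at your disposal, so the configuration must be manufactured inside the primes themselves. The paper does this with the Maynard--Tao-type theorem on consecutive primes in tuples \cite{BFB}: it produces $m+1$ consecutive primes $p_0<p_1<\cdots<p_m$ with $p_1-p_0>(m-1)(p_m-p_1)$ (an enormous gap followed immediately by a tight cluster), then runs a comparison argument --- the auxiliary sequence defined by $s'_{p_1}=p_1$ and $s'_{n+1}=s'_n+\pi(n+1)-\pi(s'_n)$ satisfies $s'_n\ge s_n$ with $s'_n-s_n$ non-increasing, hence $a_n\ge a'_n$, and an explicit computation through the cluster gives $a'_{p_m}=m$, so $a_{p_m}\ge m$. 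Without an input of this depth about patterns of \emph{consecutive} primes, no PNT- or sieve-based argument is known to produce even $a_n\ge3$ infinitely often (the case $k=2$ can be salvaged from bounded prime gaps, but that is already Zhang--Maynard, and the trick does not iterate).

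The second gap is in (ii): your backward-run argument only gives $a_n=O(\sqrt{g(n)})$, and the deferred factor $1/\sqrt{\log g(n)}$ is exactly where the content lies; ``feeding prime density back into the quadratic relation'' is not a proof. The missing idea is to iterate the upper step of (i) in the sharper form $a_{n-i}\ge k-\bigl(\pi(n)-\pi(n-i)\bigr)$ with $k=a_n$, and to bound $\pi(n)-\pi(n-i)\le\max\{1,2\pi(i)\}$ by Brun--Titchmarsh \cite{MV}. Choosing $h$ maximal with $\pi(h)\le k/3$, so that $h\asymp k\log k$, this gives $a_{n-i}\ge k/3$ for \emph{all} $i\le h$: the run of large values has length $\asymp k\log k$, not merely $k$. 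Then $s_n-s_{n-h}\ge hk/3\gg k^2\log k$, while (iv) forces $s_n-s_{n-h}=h+O(g(n))\ll k\log k+g(n)$, and comparing the two yields $k\ll\sqrt{g(n)/\log g(n)}$. So the logarithmic saving comes from Brun--Titchmarsh lengthening the run, an ingredient absent from your proposal.
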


\begin{proof}
We begin with the upper estimate in (iv).
Suppose that $s_n\le n$ holds for some $n\ge0$; note that
this is the case for $n=0$. By definition we have
\begin{equation}\label{eq:sn1}
s_{n+1}=s_n+a_{n+1}=s_n+\pi(n+1)-\pi(s_n),
\end{equation}
so that
\begin{equation}\label{eq:snrecurrence}
n+1-s_{n+1}=\bigl(n+1-s_n\bigr)-\bigl(\pi(n+1)-\pi(s_n)\bigr).
\end{equation}
The right-hand side counts the number of non-prime integers in the
interval $(s_n,n+1]$. Since this is non-negative, we have
$s_{n+1}\le n+1$. By induction it follows that $s_n\le n$ for all $n\ge0$.

Next we improve this to $s_n\le n-2$. Suppose $n\ge9$ is
such that $s_{n+i}\le n+i-2$ for $i\in\{0,1,2,3\}$; we verify this directly
for $n=9$. If $s_{n+4}\ge n+3$ then we have
\begin{align*}
n+4-s_{n+4}\le1&\implies(s_{n+3},n+4]
\text{ contains at most one composite number}\\
&\implies n+2\text{ and }n+4\text{ are prime}\\
&\implies n+3,\,n+1,\,n\text{ and }n-1\text{ are composite}\\
&\implies s_{n+3}\ge n+1\implies n+3-s_{n+3}\le 2\\
&\implies(s_{n+2},n+3]
\text{ contains at most two composite numbers}\\
&\implies s_{n+2}\ge n\implies n+2-s_{n+2}\le 2\\
&\implies(s_{n+1},n+2]
\text{ contains at most two composite numbers}\\
&\implies s_{n+1}\ge n-1\implies n+1-s_{n+1}\le 2\\
&\implies(s_n,n+1]
\text{ contains at most two composite numbers}\\
&\implies s_n\ge n-1.
\end{align*}
This contradicts the assumption that $s_n\le n-2$, so we must have
$s_{n+4}\le n+2$. By induction it follows that $s_n\le n-2$ for all
$n\ge9$.

Next, for all $n\ge1$ we have
\begin{equation}\label{eq:anpos}
a_n=\pi(n)-\pi(s_{n-1})\ge\pi(n)-\pi(n-1)\ge0.
\end{equation}
It follows that $s_n$ is non-decreasing, and thus
\begin{equation}\label{eq:andiff}
a_{n+1}-a_n=\bigl(\pi(n+1)-\pi(n)\bigr)-\bigl(\pi(s_n)-\pi(s_{n-1})\bigr)
\le\pi(n+1)-\pi(n)\le1.
\end{equation}
Moreover, by \cite[Corollary~2]{MV}, we have
$$
a_{n+1}\ge a_n-\bigl(\pi(s_{n-1}+a_n)-\pi(s_{n-1})\bigr)
\ge a_n-\max\{1,2\pi(a_n)\}.
$$
This proves (i).\footnote{We note that the lower estimate can be improved
to $a_{n+1}\ge a_n-\pi(a_n)$ for $n\ge4$ and $2\le a_n\le 1731$,
by \cite{GR}.}

Let $n$ be a natural number satisfying
\begin{equation}\label{eq:snlower}
s_n\le n-g(n).
\end{equation}
Note that this holds for $n=3$.
If $s_n\ge n+1-g(n+1)$ then
$$
s_{n+1}=s_n+\pi(n+1)-\pi(s_n)\ge s_n+\pi(n+1)-\pi(n)
\ge s_n\ge n+1-g(n+1).
$$
Otherwise we have $n-g(n)\le s_n<n+1-g(n+1)$, so that
$$
s_{n+1}=s_n+\pi(n+1)-\pi(s_n)\ge n-g(n)+\pi(n+1)-\pi(n-g(n+1)).
$$
Again by the definition of $g$ we have
$\pi(n+1)-\pi(n-g(n+1))\ge1$, so
$$
s_{n+1}\ge n+1-g(n)\ge n+1-g(n+1).
$$
Thus, in either case, \eqref{eq:snlower} holds with $n$ replaced by
$n+1$. By induction, \eqref{eq:snlower} holds for all $n\ge3$,
and this completes the proof of (iv).

Turning to (ii), let $n$ be a natural number, and suppose that
$k=a_n\ge3$. Applying \eqref{eq:andiff} inductively, we see that
\begin{equation}\label{eq:ani}
a_{n-i}\ge k-\bigl(\pi(n)-\pi(n-i)\bigr)
\quad\text{for all }i<n.
\end{equation}
Let $h\ge2$ be the largest integer such that $\pi(h)\le k/3$.
Taking $i=n-1$ in \eqref{eq:ani} we see that
$\pi(n)\ge k\ge 3\pi(h)>\pi(h)$, whence $h<n$.
Moreover, by the prime number theorem we have
$h\asymp k\log{k}$.
By \cite[Corollary~2]{MV}, for any non-negative integer $i\le h$ we have
$$
\pi(n)-\pi(n-i)\le\max\{1,2\pi(i)\}\le2\pi(h)\le 2k/3,
$$
so that $a_{n-i}\ge k/3$.
Therefore,
$$
s_n-s_{n-h}=\sum_{i=0}^{h-1}a_{n-i}\ge\frac{hk}{3}\gg k^2\log{k}.
$$
By (iv), $s_n-s_{n-h}=h+O(g(n))\ll k\log{k}+g(n)$.
Thus, $k^2\log{k}\ll k\log{k}+g(n)$, and (ii) follows.

Next, set $h_n=n-s_n$. Recall from \eqref{eq:snrecurrence}
that $h_n$ is the number of non-prime integers in the interval
$(s_{n-1},n]$.  Let $p,q$ be a pair of consecutive odd primes, and
set $n=(p+q)/2$.  If $s_{n-1}<p$ then $h_{n-1}\ge n-p=(q-p)/2$.
Otherwise, the interval $(s_{n-1},n]$ contains no primes, so
$h_n=n-s_{n-1}=h_{n-1}+1$ and $s_n=s_{n-1}$; repeating this
argument with $n$ replaced by $n+i$, it follows by induction that
$$
h_{n+i}=h_{n-1}+i+1
\quad\text{for }0\le i<q-n=(q-p)/2.
$$
In particular, $h_{q-1}=h_{n-1}+(q-p)/2\ge(q-p)/2$.
Hence, in any case we find that
$$
\max_{p\le m<q}h_m\ge(q-p)/2.
$$
Choosing $p$ and $p$ attaining a maximal gap, we have $q-p=g(q-1)+1$,
and (v) follows.\footnote{By \cite{FGKM}, it also follows that
$n-s_n\gg\frac{\log{n}\log\log{n}\log\log\log\log{n}}{\log\log\log{n}}$
infinitely often.}

Next we prove (iii). First note that if there were only finitely many
$n$ with $a_n=0$ then we would have $s_n\ge n-O(1)$, contradicting (v);
hence (iii) is true for $k=0$.  Since $a_n$ can increase by at most $1$ at
each step and there are infinitely many $n$ with $a_n=0$, to complete the
proof of (iii) it suffices to show that $\{a_n\}_{n\ge1}$ is unbounded.

To that end, for a given integer $m\ge2$ we apply the main result of
\cite{BFB} to find a sequence of $m+1$ consecutive primes with a large
gap followed by a relatively dense cluster.  Precisely, let $k=k_{m+1}$
in the notation of \cite{BFB}, and set $b_j=-m^{k+1-j}$ for $j=1,\ldots,k$.
Then it is easy to see that the polynomial $\prod_{j=1}^k(x+b_j)$
has no fixed prime divisor, so by \cite[Theorem~1]{BFB} there exists
a subset $\{h_0,\ldots,h_m\}\subseteq\{b_1,\ldots,b_k\}$ such that
$x+h_0,\ldots,x+h_m$ are consecutive primes for infinitely many $x\in\ZZ$.

Fix any such $x$, denote the corresponding primes by $p_0,\ldots,p_m$, and
write $h_i=-c^{k+1-j_i}$, where $1\le j_0<\cdots<j_m\le k$.
Then
\begin{align*}
(m-1)(h_m-h_1)&=(m-1)(m^{k+1-j_1}-m^{k+1-j_m})\\
&<m^{k+2-j_1}-m^{k+1-j_1}
\le m^{k+1-j_0}-m^{k+1-j_1}=h_1-h_0,
\end{align*}
so that
$$
p_1-p_0>(m-1)(p_m-p_1)\ge(m-1)p_m-(p_1+p_2+\cdots+p_{m-1}).
$$

Next, define sequences $\{s_n'\}_{n\ge p_1}$,
$\{a_n'\}_{n>p_1}$ and $\{d_n\}_{n\ge p_1}$ by
\begin{equation}\label{eq:snprime}
s_{p_1}'=p_1,\qquad
s_{n+1}'=s_n'+\pi(n+1)-\pi(s_n')\quad\text{for }n\ge p_1,
\end{equation}
$$
a_n'=s_n'-s_{n-1}'\quad\text{for }n>p_1
$$
and
$$
d_n=s_n'-s_n\quad\text{for }n\ge p_1.
$$
By the same proof as for $s_n$, we see that $s_n'\le n$ and $a_n'\ge0$
for all $n>p_1$. Further, subtracting \eqref{eq:snprime} and
\eqref{eq:sn1}, we find that
$$
d_{n+1}=d_n-\bigl(\pi(s_n+d_n)-\pi(s_n)\bigr)
\quad\text{for }n\ge p_1.
$$
It follows that $0\le d_{n+1}\le d_n$, so that
$$
a_n'=a_n+d_n-d_{n-1}\le a_n\quad\text{for }n>p_1.
$$

A straightforward inductive argument now shows that
\begin{align*}
s_n'&=p_0&&\text{for }p_0\le n<p_1,\\
s_n'&=p_0+n-p_1+1&&\text{for }p_1\le n<p_2,\\
s_n'&=p_0+(p_2-p_1)+2(n-p_2+1)&&\text{for }p_2\le n<p_3,\\
&\;\;\vdots\\
s_n'&=p_0+(p_2-p_1)+2(p_3-p_2)+\cdots\\
&\quad+(m-2)(p_{m-1}-p_{m-2})+(m-1)(n-p_{m-1}+1)
&&\text{for }p_{m-1}\le n<p_m.
\end{align*}
In particular,
$$
s_{p_m-1}'=p_0+(m-1)p_m-(p_1+p_2+\cdots+p_{m-1})<p_1,
$$
so that
$$
a_{p_m}\ge a_{p_m}'=\pi(p_m)-\pi(s_{p_m-1}')=m.
$$
Since $m$ was arbitrary, this completes the proof of (iii).
\end{proof}

\section{Some conjectures}
It follows from (ii) and (iv) that
\begin{equation}\label{eq:limits}
\lim_{n\to \infty}\frac{a_n}{n}=0
\quad\text{and}\quad
\lim_{n\to\infty}\frac{s_n}{n}=1.
\end{equation}
We further conjecture the following.
\begin{conjecture}\
\label{conj:1}
\begin{itemize}
\item[(A)]For any $k\ge0$, the set $\{n\ge1:a_n=k\}$ has a positive density
$\delta_k$, satisfying
$\delta_1>\delta_0>\delta_2>\delta_3>\delta_4>\ldots$
\item[(B)]$\liminf_{n\to\infty}(n-s_n)<\infty$.
\item[(C)]For any integer $b\ge2$, the number
$A(b)=\sum_{n\ge1}a_nb^{-n}\in\RR$ is transcendental.
\end{itemize}
\end{conjecture}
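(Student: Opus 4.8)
All three parts are open, so what follows is a strategy rather than a proof; I organize everything around the deficit process $h_n=n-s_n$ introduced in the proof of (v). Combining \eqref{eq:sn1} with $h_n=n-s_n$ gives the exact recurrence $h_{n+1}=h_n+1-a_{n+1}$, where
\[
a_{n+1}=\pi(n+1)-\pi(n-h_n)=\#\{p\text{ prime}:n-h_n<p\le n+1\}
\]
is the number of primes in the trailing window $(n-h_n,n+1]$, whose length is itself the current deficit. Thus the whole system is a self-referential prime-counting process, and I would first try to understand the joint evolution of $h_n$ and the local prime configuration near $n$, then read off (A), (B) and (C) as, respectively, an equidistribution statement, a recurrence statement, and a Diophantine statement about this process. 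The one piece of rigorous global information available is that \eqref{eq:limits} forces the average value of $a_n$ to be $1$, so that $\sum_k k\delta_k=1$ whenever the densities exist.

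For (A), the plan is to replace the primes by a Hardy--Littlewood (or Cram\'er) random model, under which the window $(n-h_n,n+1]$ contains, to first order, a Poisson-distributed number of primes with mean $(h_n+1)/\log n$. In the regime where $h_n$ stabilizes near its balance value (where the expected intake of one prime per step matches the automatic $+1$), the pair consisting of $h_n$ together with the residual prime pattern should become an ergodic Markov-type chain, and $\delta_k$ would be the stationary probability that the window holds exactly $k$ primes. The mean-$1$ constraint $\sum_k k\delta_k=1$ already pins down the gross shape, and the conjectured ordering $\delta_1>\delta_0>\delta_2>\cdots$ reflects the fact that the true stationary law is a perturbation of $\mathrm{Poisson}(1)$ (for which $\delta_0=\delta_1=e^{-1}$), with the self-referential feedback breaking the tie in favor of $\delta_1$. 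The main obstacle is that no unconditional input of this strength exists: controlling primes in windows of length $h_n$, which may be as large as $g(n)$ and is itself determined by the process, lies well beyond current short-interval technology, and even granting the Hardy--Littlewood conjectures one must still prove that the feedback loop genuinely equidistributes rather than merely guess its stationary law.

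For (B) I would aim to show $h_n=O(1)$ infinitely often. Since $2\le h_n\le g(n)$ by (iv) and $a_n$ has mean $1$, the walk $h_{n+1}=h_n+1-a_{n+1}$ has zero drift, so one expects recurrence to bounded heights; the difficulty is that a generic region of primes, having density $1/\log n$, produces $a\approx1$ and merely keeps $h$ \emph{constant} rather than decreasing it. To push an accumulated deficit back down to $O(1)$ one needs a sustained stretch where primes are denser than average, delivering many steps with $a\ge2$; the natural tool is the cluster machinery of \cite{BFB} already used for (iii), which supplies intervals containing anomalously many primes. The crux, and main obstacle, is twofold: first, by (ii) each individual step can lower $h$ by at most $O\bigl(\sqrt{g/\log g}\bigr)$, so one must chain together a long run of prime-rich steps; and second, the admissible-tuple constructions locate such clusters only at uncontrolled, extremely sparse positions, and coupling the essentially arbitrary location of a usable cluster to the deterministic value of $h_n$ at that moment seems to require genuinely new information about where dense clusters sit relative to the preceding prime distribution.

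Part (C) appears to be the deepest, and I would proceed in two stages. First, irrationality: if $A(b)\in\QQ$ then the base-$b$ digit string of $A(b)$, obtained from $(a_n)$ after resolving the carries caused by the terms with $a_n\ge b$ (which occur infinitely often since $(a_n)$ is unbounded by the proof of (iii)), would be eventually periodic; I would derive a contradiction from the coexistence of infinitely many long blocks of $1$'s, infinitely many zeros and arbitrarily large values, using the slow-growth bound (ii) to keep the carries under control. Second, transcendence: the natural route is the Subspace-theorem-based repetition (``stammering'') criterion of Adamczewski and Bugeaud, which yields transcendence once the digit string is shown to begin with infinitely many long approximate repetitions occurring not too far out, the structural hope being that the statistically self-similar alternation of long runs of $1$'s with rare short excursions produces such repetitions. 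The main obstacle is that these criteria demand \emph{exact} combinatorial repetition, whereas the prime-driven sequence $(a_n)$ is only expected to repeat \emph{statistically}; absent automaticity or a provable exact self-similarity, establishing transcendence --- indeed, even irrationality --- seems to require a transcendence criterion tolerant of statistical repetition, or substantial new Diophantine input.
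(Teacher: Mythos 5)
The statement you were asked about is Conjecture~\ref{conj:1}, which the paper leaves open: there is no proof of (A), (B) or (C) anywhere in the paper, only partial evidence in Theorem~\ref{theorem:2}. You correctly recognize this and offer strategy rather than proof, and your heuristics are well aligned with the paper's framing: your mean-one identity $\sum_k k\delta_k=1$ is exactly the observation $N_1+2N_2+\cdots=s_N$ from which the paper extracts Theorem~\ref{theorem:2}(i) (one of $k\in\{0,1\}$ has lower density $\ge\frac13$); your Poisson$(1)$ picture with the tie $\delta_0=\delta_1=e^{-1}$ broken in favor of $\delta_1$ matches Table~\ref{tab:Tab2}; your target $h_n=O(1)$ infinitely often for (B) matches the paper's remark that $s_n=n-2$ likely holds infinitely often, forcing $\liminf(n-s_n)=2$; and your appeal to Adamczewski--Bugeaud for (C) matches the paper's remark that automaticity of $\{a_n\}$ would yield (C) via \cite{ABL}.

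There are, however, two concrete miscalibrations against what the paper actually proves. First, you assert that ``even irrationality'' of $A(b)$ seems to require new Diophantine input or a repetition-tolerant criterion; in fact the paper proves $A(b)\notin\QQ$ unconditionally (Theorem~\ref{theorem:2}(iv)), and by a route that avoids the carry problem you flag entirely: it never passes to base-$b$ digits. Instead, since $a_{n+1}\ne a_n$ forces a prime at $n+1$ or at $s_n$, the number of changes up to $N$ is at most $2\pi(N)$, so some constant run has length $\ge(\frac12-\varepsilon)\log N$; if $A(b)=u/v$, the tail after such a run differs from the constant geometric series by a quantity of modulus $\ll v\sqrt{g(N)}\,N^{-(\frac12-\varepsilon)\log b}$ that must be an integer, and since $\log 2>\frac{21}{40}$ the bound of \cite{BHP} forces it to vanish; then a later zero of the sequence (Theorem~\ref{theorem:1}(iii)) together with $0\le a_{n_3+j}\le j$, strict infinitely often, traps an integer strictly between $0$ and $1$. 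Your carry-resolution approach would genuinely struggle exactly where you say it would, so the lesson is that working with the series, not the digit string, is what makes irrationality accessible. Second, for (B) your pessimism about needing to couple sparse prime clusters of \cite{BFB} to the value of $h_n$ overlooks that a soft averaging argument already does better than $h_n\le g(n)$: the paper's Theorem~\ref{theorem:2}(iii) shows $\liminf (n-s_n)/\log n\le1$ by summing the recurrence over $[M,M+M^{7/12})$ and using Heath-Brown's short-interval prime number theorem \cite{Heath-Brown} --- no cluster location is needed to get $h_n\le(1+o(1))\log n$ infinitely often, though closing the gap from $\log n$ to $O(1)$ does indeed remain open for essentially the reasons you describe.
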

In connection with (B),
it seems likely from numerical computations that $s_n=n-2$ infinitely
often; by (iv) this would imply that $\liminf_{n\to\infty}(n-s_n)=2$.
If $\{a_n\}_{n\ge1}$ were an automatic sequence then (C) would follow
from the main result in \cite{ABL}.

\begin{table*}[!h]
\scriptsize
\begin{center}
\begin{tabular}{ccccccc}
\noalign{\smallskip}
 $ $ & $ $ & $ $ & $k$ & $ $ & $ $ & $ $\\
\noalign{\smallskip}\cmidrule{2-7}\noalign{\smallskip}
\noalign{\smallskip}
$ i $ & $0$ & $1$ & $2$ & $3$ & $4$ & $5$ \\
\noalign{\smallskip}\hline\noalign{\smallskip}
$1 $ & $4$ & $5$ & $1$ & $0$ & $0$ & $0$\\
$ 2 $ & $21$ & $65$ & $14$ & $0$ & $0$ & $0$\\
$ 3 $ & $219$ & $577$ & $195$ & $9$ & $0$ & $0$\\
$ 4 $ & $2663$ & $4990$ & $2065$ & $275$ & $7$ & $0$\\
$ 5 $ & $27671$ & $48507$ & $20265$ & $3287$ & $257$ & $13$ \\
$ 6 $ & $284408$ & $475421$ & $199765$ & $36779$ & $3443$ & $181$\\
$ 7 $ & $2918543$ & $4650175$ & $1991476$ & $395418$ & $41464$ & $2800$ \\
$ 8 $ & $29607905$ & $45960839$ & $19809319$ & $4108991$ & $473258$ & $37723$ \\
$ 9 $ & $299530722$ & $455176760$ & $197289962$ & $42282008$ & $5235205$ & $456865$\\
$ 10 $ & $3022594978$ & $4517557589$ & $1965289965$ & $432413509$ & $56484650$ & $5291355$ \\
$ 11 $ & $30450733004$ & $44894741076$ & $19590459294$ & $4400511075$ & $599692839$ & $59396517$\\
$ 12 $ & $306392386246$ & $446604857931$ & $195374867235$ & $44626996156$ & $6295691446$ & $652786704$ \\
$ 13 $ & $3080065196771$  & $4446030725007$ & $1949223822125$ & $451486351994$ & $65543491929$ & $7053078276$ \\
$ 14 $ & $30940285500711$ & $44287714979733$ & $19452797930000$ &
$4559198048883$ & $678055064108$ & $75277782875$ \\
\hline
\noalign{\smallskip}
\end{tabular}
\caption {Values of $\#\{n\le 10^{i}: a_n=k\}$ for  $0 \le k \le 5$ and $1 \le i \le 14$.}
\label{tab:Tab2}
\end{center}
\end{table*}

These conjectures are supported by numerical evidence, such as in
Table~\ref{tab:Tab2}. We provide the following theoretical evidence.
\begin{theorem}\
\label{theorem:2}
\begin{itemize}
\item[(i)]$\#\{n\ge1:a_n=k\}$ has positive lower density for at least
one $k\in\{0,1\}$.
\item[(ii)]We have
$\#\{n\le x:a_n=0\}\gg\frac{x\log\log{x}}{\log^2x}$
for $x\ge3$ under the hypothesis that $g(x)\ll(\log{x})^C$ for some $C>1$,
and
$\#\{n\le x:a_n=0\}\ge\exp\bigl((\log{x})^{\frac14-o(1)}\bigr)$
unconditionally.
\item[(iii)]$\liminf_{n\to\infty}\frac{n-s_n}{\log{n}}\le1$.
\item[(iv)]The number $A(b)$ is irrational.
\end{itemize}
\end{theorem}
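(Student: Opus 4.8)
The starting point for all four parts is to rewrite the recurrence in terms of $h_n=n-s_n$. Since $s_{n-1}=n-h_{n-1}$, we have $a_n=\pi(n)-\pi(n-h_{n-1})$, so $a_n$ is exactly the number of primes in the window $(n-h_{n-1},n]$, and
$$h_n=h_{n-1}+1-\bigl(\pi(n)-\pi(n-h_{n-1})\bigr).$$
Thus $h_n$ increases by $1$ precisely when $a_n=0$, is unchanged when $a_n=1$, and decreases by $a_n-1$ when $a_n\ge2$; throughout, $2\le h_n\le g(n)$ by Theorem~\ref{theorem:1}(iv). This self-regulating dynamics, in which the window has expected length $\approx\log n$, drives the whole analysis.

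For (i), write $N_k(x)=\#\{n\le x:a_n=k\}$ (with $x$ integral). Counting indices and summing values gives $\sum_kN_k(x)=x$ and $\sum_kkN_k(x)=s_x$; since $\sum_{k\ge2}N_k\le\tfrac12\sum_{k\ge2}kN_k\le\tfrac12 s_x\le\tfrac x2$, I would deduce $N_0(x)+N_1(x)\ge x/2$ and, subtracting the two identities, $N_0(x)=h_x+\sum_{k\ge2}(k-1)N_k(x)\ge\#\{n\le x:a_n\ge2\}$. The plan is then to upgrade ``one of $N_0,N_1$ exceeds $x/4$ for every $x$'' to ``one of them has positive lower density'' using the step structure of Theorem~\ref{theorem:1}(i): since $a$ can rise only by $+1$, every maximal run with $a\ge2$ is entered from a term equal to $1$ and every maximal run of $0$'s is left by a term equal to $1$, so $N_1(x)$ dominates the number of such runs. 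The main difficulty in (i) lies exactly here: a run with $a\ge2$ can in principle have length up to $g(n)$, so few $1$'s could support many large values, and ruling out this long-excursion scenario seems to require genuine input on the typical size of $h_n$ beyond $h_n\le g(n)$. For (iii), suppose toward a contradiction that $h_n\ge(1+\varepsilon)\log n$ for all $n\ge n_0$. Writing $N-h_N=s_N=\sum_{n\le N}\bigl(\pi(n)-\pi(n-h_{n-1})\bigr)$ and interchanging summation, each prime $p\le N$ is counted once for every $n$ with $n-h_{n-1}<p\le n$; the hypothesis forces this count to be at least $(1+\varepsilon)(1-o(1))\log p$, so by the prime number theorem
$$N-h_N\ge(1+\varepsilon)(1-o(1))\sum_{p\le N}\log p=(1+\varepsilon)(1-o(1))N,$$
which is absurd for large $N$. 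Hence $\liminf_n h_n/\log n\le1$, proving (iii).

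For (iv), I would show $A(b)$ is irrational by exhibiting infinitely many $N$ for which $b^NA$ is extremely close to, but not equal to, an integer. Writing $b^NA=\sum_{n\le N}a_nb^{N-n}+R_N$ with $R_N=\sum_{j\ge1}a_{N+j}b^{-j}\ge0$, the first sum is an integer, so $\{b^NA\}=R_N$ whenever $R_N<1$. Using the bound $a_n=O(\sqrt{g(n)})=O(\sqrt n)$ from Theorem~\ref{theorem:1}(ii), a run of zeros $a_{N+1}=\cdots=a_{N+L}=0$ of length $L\ge C\log N$ (with $C$ depending on $b$ and the would-be denominator $q$) forces $0<R_N<1/q$. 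If $A=p/q$ were rational then $\{b^NA\}\in\{0,1/q,\dots,(q-1)/q\}$, contradicting $0<R_N<1/q$. It remains to produce such zero-runs: since $h_n$ increases by $1$ exactly at zeros, a long prime gap beginning while the pointer lag $h$ is small yields a long contiguous run of zeros, and combining part (iii) (which supplies infinitely many $n$ with $h_n\le2\log n$) with the existence of prime gaps exceeding $C\log x$ infinitely often gives runs of the required length.

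Part (ii) quantifies this zero-counting. Under $g(x)\ll(\log x)^C$ the lag satisfies $h_n\le g(n)\ll(\log n)^C$ \emph{for all} $n$, so for every prime gap $(p,q]$ with $q\le x$, $q-p\ge2g(x)$, and every $n\in(p+g(x),q)$, the window $(n-h_{n-1},n]\subseteq(p,q)$ contains no prime and $a_n=0$; hence each such gap contributes at least $(q-p)/2$ zeros, and $N_0(x)$ is at least half the total length of the long gaps up to $x$, which known results on prime gaps convert into $\gg x\log\log x/\log^2x$. Unconditionally one has no control on $h_n$, so I would instead force the lag to be caught up by construction, applying the prime-constellation results of \cite{BFB} (as in the proof of Theorem~\ref{theorem:1}(iii)) to produce a dense cluster immediately followed by a long gap at infinitely many $x$ and counting the resulting configurations, which yields $\exp\bigl((\log x)^{1/4-o(1)}\bigr)$. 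The main obstacle throughout (ii), and the crux shared with (iv), is precisely this \emph{catch-up problem}: a large gap produces the expected zeros only when $h_n$ is small at the start of the gap, and guaranteeing this unconditionally — rather than merely invoking $h_n\le g(n)$ — is the delicate point.
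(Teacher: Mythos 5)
Your part (iii) is correct and is a genuinely different argument from the paper's. The paper works in a short interval $[M,M+M^{7/12})$, bounds $s_N-s_M\ge(h+1)\bigl(\pi(N-h)-\pi(M)\bigr)$, and invokes Heath-Brown's theorem on primes in short intervals together with the Baker--Harman--Pintz bound $g(M)\le M^{21/40}$; your double count of pairs $(p,n)$ with $s_{n-1}<p\le n$ under the contradiction hypothesis $n-s_n\ge(1+\varepsilon)\log n$ needs only the prime number theorem and $s_N\le N$, so it is more elementary and avoids all deep input (it yields only ``infinitely many $n$'' rather than the paper's localization in every short interval, but the theorem asks no more). Your part (i) is the paper's own counting: your two inequalities combine to $N_1+2N_0\ge x$, hence $\max\{N_0,N_1\}\ge x/3$ for every $x$, which is precisely where the paper stops, asserting that the lower-density claim follows from this pointwise bound; the run-structure difficulty you raise concerns a step the paper treats as immediate, and nothing beyond the counting is done there.

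The genuine gaps are in (ii) and (iv), and the ``catch-up problem'' you name is indeed fatal to your route; the paper does not solve that problem but sidesteps it, in two different ways absent from your proposal. For (ii), your conditional argument is circular: you require prime gaps $q-p\ge 2g(x)$ with $q\le x$, but by definition $g(x)$ is (within $O(1)$) the length of the \emph{largest} prime gap below $x$, so the set of gaps you sum over is empty; and unconditionally no result supplies many gaps of length $\gg(\log x)^{C}$ with $C>1$ (even a single gap of size $(\log x)^{1+\delta}$ is not known to exist, \cite{FGKM} being of size $\log x$ times iterated-log factors). The paper instead assumes $N_0$ is small and derives rigidity: then $a_n=1$ on an interval of length $H_T\asymp\log x$ around at least half of the primes $p_i\le x$, so $s_n=n-c$ there for a fixed lag $c$, and \eqref{eq:andiff} forces every $p_{i+j}-c-1$, $0\le j\le T$, to be prime. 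That a prime constellation is replicated by a shift at so many $p_i$ is then shown to be too rare by the sieve bound of Lemma~\ref{sieve}, with Lemmas~\ref{lem:1} and~\ref{lem:2} controlling the configurations; optimizing the parameters $r,T$ produces both the bound $x\log\log x/\log^2x$ and the unconditional $\exp\bigl((\log x)^{\frac14-o(1)}\bigr)$ --- the exponent $\tfrac14$ comes from the sieve parameter $\frac{r-1}{4r+2}$, and nothing in your appeal to \cite{BFB} can generate it.

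For (iv), your near-integer scheme is sound in outline, but it needs infinitely many runs of \emph{zeros} of length $\ge C(b,q)\log N$, i.e.\ a long prime gap beginning while $h_n$ is small, and this co-occurrence is exactly what cannot be guaranteed. The missing idea is that zero-runs are unnecessary: runs on which $a_n$ is \emph{constant}, of whatever value $c$, suffice, and these exist unconditionally for every large $N$ by pigeonhole, since $a_{n+1}\ne a_n$ forces a prime in $(n,n+1]$ or in $(s_{n-1},s_n]$, so there are at most $2\pi(N)$ changes up to $N$ and some constant run has length $\ge N/(2\pi(N)+1)\ge(\tfrac12-\varepsilon)\log N$. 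If $A(b)=u/v$, such a run makes a certain nonzero-or-integer quantity of size $\ll v\,a_{n_1}b^{-(\frac12-\varepsilon)\log N}\ll v\,N^{\frac{21}{80}-(\frac12-\varepsilon)\log b}$ vanish for large $N$ (here Theorem~\ref{theorem:1}(ii) and \cite{BHP} bound $a_{n_1}$, and $\log 2>\tfrac{21}{40}$ is what makes the exponent negative), forcing the tail to equal $c/(b-1)$ exactly; this is then contradicted using Theorem~\ref{theorem:1}(iii), which supplies both zero values and positive values in the tail. Substituting constant runs for zero runs is the single change that would rescue your (iv).
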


\subsection*{Proof of (i) and (ii)}
We begin by setting some notation to be used in the proof.
Let $x>0$ be a large real number, and let $r,T\in\ZZ$ be parameters,
to be specified in due course, satisfying
$$
2\le r\le T\le r(\log{x})^{\frac{r-1}{4r+2}}/\log\log{x}.
$$
We regard $r$ as fixed throughout the proof, so the meaning of $\ll$,
$O$, $o$, ``sufficiently large'', etc.\ may depend implicitly on $r$.
Let $K\ge1$ be a large (absolute) constant, and define
$$
H_j=Kj^2(\log{x})(\log{T})
\quad\text{for }0\le j\le T.
$$

Next, set $N=\lfloor x\rfloor$ and
$$
N_k=\#\{1\le n\le x:a_n=k\}
\quad\text{for }k\ge0.
$$
Then
$$
N_0+N_1+N_2+\cdots=N
\quad\text{and}\quad
N_1+2N_2+\cdots=s_N,
$$
so that
$$
N_0-(N_2+2N_3+\cdots)=N-s_N>0.
$$
Thus
$$
N_0+(N_2+N_3+\cdots)\le
N_0+(N_2+2N_3+\cdots)<2N_0.
$$
This also shows that $N_1+2N_0>N$, so that
$\max\{N_0,N_1\}>\frac13N$. It follows that at least one of the sets
$\{n\ge1:a_n=k\}$ for $k\in\{0,1\}$ has lower density $\ge\frac13$.
This proves (i).

Next, setting $J=\{1\le n\le x:a_n\ne1\}$, we have
$\#J<2N_0$.
Let
$$
L=\{\ell\in\ZZ:1\le\ell\le x\text{ and }
a_n\ne1\text{ for some }n\in[\ell-1,\ell+H_T]\cap\ZZ_{>0}\}.
$$
By \cite[Corollary~5]{MV}, the
number of primes contained in $L$ is at most
$$
2\pi(H_T+2)(\#J+1)\le4\pi(H_T+2)N_0.
$$
Suppose, for the sake of contradiction, that
$4\pi(H_T+2)N_0\le\frac12\pi(x)$.
From now on we consider primes $p\in[1,x]\setminus L$, which is at least
half of the primes $p\le x$. These primes have the property that
$a_n=1$ for all integers $n$ satisfying $p-1\le n\le p+H_T$.

We need two easy facts about primes.
\begin{lemma}\label{lem:1}
Let $p_i$ denote the $i$th prime.
For a suitable choice of the constant $K$ and all sufficiently large
$x$, there are at most $\frac14\pi(x)$ primes
$p_i\le x$ for which there exists $j\in\{1,\ldots,T\}$ satisfying
$p_{i+j}-p_i>H_j$.
\end{lemma}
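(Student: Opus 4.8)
The plan is to treat each value of $j$ separately by a first-moment (Markov) argument, and then sum the resulting bounds over $1\le j\le T$. The point to keep in mind is that although the threshold $H_j=Kj^2(\log x)(\log T)$ exceeds the typical size $p_{i+j}-p_i\approx j\log x$ by a factor $\approx j\log T$, the extra $\log T$ is exactly compensated by the harmonic sum $\sum_{j\le T}1/j\asymp\log T$ that arises when we union over $j$. So rather than any delicate input on prime gaps, a crude telescoping estimate for the first moment should already suffice.

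Concretely, I would first estimate $\sum_{p_i\le x}(p_{i+j}-p_i)$. Writing $p_{i+j}-p_i=\sum_{l=i}^{i+j-1}(p_{l+1}-p_l)$ and interchanging the order of summation, each consecutive gap $p_{l+1}-p_l$ is counted at most $j$ times, and only for $l\le\pi(x)+j-1$; the inner sum telescopes, giving
\[
\sum_{p_i\le x}(p_{i+j}-p_i)\le j\,(p_{\pi(x)+j}-2)\le 2xj,
\]
where the last inequality uses $p_{\pi(x)+j}\le 2x$. This last bound holds for all sufficiently large $x$ uniformly in $j\le T$, since $\pi(2x)-\pi(x)\to\infty$ while $T=o(\log x)$ within the stated range.

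Markov's inequality then yields, for each $j$,
\[
\#\{i:p_i\le x,\ p_{i+j}-p_i>H_j\}\le\frac{2xj}{H_j}=\frac{2x}{Kj(\log x)(\log T)}.
\]
Summing over $1\le j\le T$ and using $\sum_{j=1}^{T}1/j\ll\log T$, the number of primes $p_i\le x$ for which $p_{i+j}-p_i>H_j$ holds for \emph{some} $j\le T$ is at most
\[
\frac{2x}{K(\log x)(\log T)}\sum_{j=1}^{T}\frac1j\ll\frac{x}{K\log x}\ll\frac{\pi(x)}{K},
\]
the final step using the prime number theorem (Chebyshev bounds suffice). Taking the absolute constant $K$ large enough makes this at most $\tfrac14\pi(x)$ for all sufficiently large $x$, which is the assertion.

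I do not expect a serious obstacle here: the argument is a routine moment computation, and the genuine ``content'' is the bookkeeping observation that $H_j$ is calibrated so the harmonic sum cancels the $\log T$. The only technical point requiring care is the uniform bound $p_{\pi(x)+j}\le 2x$ for $j\le T$, which is exactly where the smallness of $T$ relative to $x$ is used; for this step I would only need $T=o(\log x)$, which the prescribed range for $T$ guarantees.
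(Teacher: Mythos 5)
Your proposal is correct and follows essentially the same route as the paper: a first-moment bound on $\sum_{p_i\le x}(p_{i+j}-p_i)$, Markov's inequality for each fixed $j$, and a union over $j\le T$ in which the harmonic sum $\sum_{j\le T}1/j\ll\log T$ cancels the $\log T$ built into $H_j$. The only (cosmetic) difference is that the paper evaluates the first moment via the identity $\sum_{i=1}^{\pi(x)}(p_{i+j}-p_i)<\sum_{k=\pi(x)+1}^{\pi(x)+j}p_k=(1+o(1))jx$ using the prime number theorem, whereas you telescope over consecutive gaps and use the cruder Chebyshev-type bound $p_{\pi(x)+j}\le 2x$; both yield the same conclusion.
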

\begin{proof}
Fix $j\in\{1,\ldots,T\}$. Then, since $j\le T=o(\pi(x))$, we have
$$
\sum_{i=1}^{\pi(x)}(p_{i+j}-p_i)<\sum_{k=\pi(x)+1}^{\pi(x)+j}p_k
=(1+o(1))jx,
$$
by the prime number theorem.
Thus, the number of $i$ such that $p_{i+j}-p_i>H_j$ is
$\ll jx/H_j=x/(jH_1)$. Summing this over all $j\le T$, we get a bound of 
$$
\ll\frac{x}{H_1}\sum_{j\le T}\frac1j\ll\frac{x\log T}{H_1}
\ll\frac{\pi(x)}{K}.
$$
For $K$ sufficiently large this is less than $\frac14\pi(x)$.
\end{proof}

\begin{lemma}\label{lem:2}
For a fixed choice of $r\ge2$, there are
at most $o(\pi(x))$ primes $p_i\le x$ satisfying the following
conditions:
\begin{itemize}
\item[(i)]$p_{i+j}-p_i\le H_j$ for all $j\in\{0,\ldots,T\}$;
\item[(ii)]there are vectors
$(j_1,\ldots,j_r),(j_1',\ldots,j_r')\in\ZZ^r$
such that
$$
0\le j_1<j_2<\cdots<j_r\le T,
\quad
0\le j_1'<j_2'<\cdots<j_r'\le T
$$
and
$$
p_{i+j_1}-p_{i+j_1'}=\cdots=p_{i+j_r}-p_{i+j_r'}\ne0.
$$
\end{itemize}
\end{lemma}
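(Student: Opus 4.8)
The plan is to recast condition~(ii) as the statement that a short interval around $p_i$ contains an $r$-fold cluster of prime pairs at one common gap, and then to count the primes in question by summing an upper-bound sieve estimate over all possible shapes of such a cluster. First I would normalise: after possibly interchanging the two vectors I may assume $d:=p_{i+j_1}-p_{i+j_1'}>0$, and writing $q_\ell:=p_{i+j_\ell'}$, condition~(ii) says that $q_1<\dots<q_r$ and $q_1+d,\dots,q_r+d$ are all prime. By condition~(i) every $p_{i+j}$ with $0\le j\le T$ lies in $[p_i,p_i+H_T]$, so with $\beta_\ell:=q_\ell-p_i$ one has $0\le\beta_1<\dots<\beta_r\le H_T$ and $1\le d\le H_T$, while $p_i+h$ is prime for every $h$ in
$$
\mathcal H=\mathcal H(\beta,d)=\{0\}\cup\{\beta_1,\dots,\beta_r\}\cup\{\beta_1+d,\dots,\beta_r+d\}\subseteq[0,H_T].
$$
Anchoring the configuration at the prime $p_i$ (rather than at $q_1$) is the crucial point: it places the offset $0$ in $\mathcal H$ and so buys an extra factor of $\log x$ of saving in the sieve, which is exactly what makes the argument succeed for every $r\ge2$.

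Next, for a fixed shape $(\beta,d)$ I would invoke a standard upper-bound sieve (Selberg's, say) to get
$$
\#\{p_i\le x:\ p_i+h\text{ prime for all }h\in\mathcal H\}\ll_r \mathfrak{S}(\mathcal H)\,\frac{x}{(\log x)^{|\mathcal H|}},
$$
where $\mathfrak{S}(\mathcal H)$ is the usual singular series. Since the entries of $\mathcal H$ are $\le H_T=x^{o(1)}$, only a few primes divide the associated discriminant, and one gets the uniform bound $\mathfrak{S}(\mathcal H)\ll_r(\log\log x)^{O_r(1)}$, which is negligible against the powers of $\log x$ below. Taking a union bound over all shapes $(\beta,d)$ — each prime counted by the lemma contributing at least one — would then give
$$
\#\{p_i\le x\text{ satisfying (i) and (ii)}\}\ll_r x(\log\log x)^{O_r(1)}\sum_{(\beta,d)}(\log x)^{-|\mathcal H(\beta,d)|}.
$$

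It then remains to estimate the shape sum. For the generic shape the offsets $0,\beta_\ell,\beta_\ell+d$ are distinct, so $|\mathcal H|=2r+1$; such shapes number $\ll H_T^{r+1}$ and contribute $\ll_r H_T^{r+1}x/(\log x)^{2r+1}$. A coincidence among the offsets can only be $\beta_1=0$ or $\beta_m-\beta_\ell=d$, i.e.\ a nonzero linear relation in the variables $\beta,d$; moreover the ``difference graph'' formed by the relations $\beta_m-\beta_\ell=d$ must be acyclic, since a cycle would force a positive multiple of $d$ to vanish. Hence $c$ coincidences impose $c$ independent relations, so the shapes with $|\mathcal H|=2r+1-c$ number $\ll_r H_T^{r+1-c}$ and contribute $\ll_r H_T^{r+1-c}x/(\log x)^{2r+1-c}$. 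Dividing by $\pi(x)\asymp x/\log x$, the exponent of $\log x$ in the $c$-th term is $\tfrac{3r(r+1-c)}{2r+1}-(2r-c)$, a decreasing function of $c$ for $r\ge2$, so the generic term $c=0$ dominates.

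Finally I would substitute the range of $T$: from $T\le r(\log x)^{(r-1)/(4r+2)}/\log\log x$ one gets $H_T\ll_r(\log x)^{3r/(2r+1)}/\log\log x$, whence
$$
\frac{\#\{p_i\le x\text{ satisfying (i) and (ii)}\}}{\pi(x)}\ll_r\frac{H_T^{r+1}}{(\log x)^{2r}}(\log\log x)^{O_r(1)}\ll_r(\log x)^{-\frac{r(r-1)}{2r+1}}(\log\log x)^{O_r(1)}.
$$
For every fixed $r\ge2$ the exponent $-r(r-1)/(2r+1)$ is strictly negative, so the right-hand side is $o(1)$, giving the claim. The hard part will be the sieve step together with the bookkeeping for the degenerate shapes: one must run the upper-bound sieve uniformly in the shape with adequate control on $\mathfrak{S}(\mathcal H)$, and verify that the loss of sieve saving from each coinciding offset is always outweighed by the corresponding drop in the number of shapes — which is precisely where the upper bound on $T$ (equivalently on $H_T$) is used decisively.
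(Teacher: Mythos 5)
Your proposal is correct, and at its core it runs on the same machinery as the paper's proof: anchor the tuple at $p_i$ (so the offset $0$ contributes an extra linear form, hence an extra $\log x$ of sieve saving), apply an upper-bound sieve for prime tuples, take a union bound over configurations, and analyse degeneracies by showing that each coincidence among the offsets costs a factor $H_T$ in the number of configurations while weakening the sieve exponent by only one. The genuine difference is the parametrization of the union bound, and it is a real improvement. The paper keeps track of the index sets $\{j_1,\ldots,j_r\}\cup\{j_1',\ldots,j_r'\}$, of cardinality $k\in\{r+1,\ldots,2r\}$, paying $O(T^k)$ for their choice on top of $H_T^{k+1-r}$ for the gap values; those $T^{O(r)}$ factors consume most of the sieve saving, which is why the paper's final bound is only $\pi(x)$ times $(\log\log x)^{-(3r+1)+o(1)}$ and why the exact shape of the constraint on $T$ matters there. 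You observe that the union bound never needs the indices at all --- only the offsets $(\beta_1,\ldots,\beta_r,d)$ of the primes involved --- so you pay only $\ll_r H_T^{r+1-c}$ per degeneracy class, and your final exponent computation is right: the $c$-th term's exponent $\frac{3r(r+1-c)}{2r+1}-(2r-c)$ is decreasing in $c$, giving a saving of $(\log x)^{-r(r-1)/(2r+1)+o(1)}$, a power of $\log x$ where the paper only saves powers of $\log\log x$, from the same hypothesis on $T$ (used only through $H_T\ll_r(\log x)^{3r/(2r+1)}$). Three points to tighten in a full write-up. First, your acyclicity argument rules out \emph{directed} cycles (a directed cycle forces a positive multiple of $d$ to vanish), but a cycle of mixed orientation is not excluded that way; you need the additional observation that in- and out-degrees in the relation graph are at most one (immediate from distinctness of the $\beta_\ell$), after which every cycle is directed and the independence of the $c$ relations follows since the graph is a forest. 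Second, the sieve inequality with the singular series presumes $\mathcal H$ admissible; this is automatic here after discarding $O_r(1)$ small primes, since if $\mathcal H$ covered all residues modulo some prime $q\le|\mathcal H|$ then $q$ would divide one of the primes $p_i+h>q$. Third, the bound $\mathfrak{S}(\mathcal H)\ll_r(\log\log x)^{O_r(1)}$ should be justified via the discriminant $\prod_{h\ne h'}(h-h')\le H_T^{O_r(1)}\le(\log x)^{O_r(1)}$, exactly as the paper bounds $E/\varphi(E)\ll\log\log\log x$; none of these affects the validity of your argument.
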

\begin{proof}
Let $p_i$ be such a prime, and set
$$
h_j=p_{i+j}-p_i\quad\text{for }j\in\{0,\ldots,T\}.
$$
Let $(j_1,\ldots,j_r),(j_1',\ldots,j_r')$ be as in (ii), and write
$$
\{j_1,\ldots,j_r\}\cup\{j_1',\ldots,j_r'\}=\{\ell_1,\ldots,\ell_k\},
$$
with $\ell_1<\cdots<\ell_k$.
From our hypotheses it is clear that $r+1\le k\le 2r$.
Let
$$
d=h_{j_1}-h_{j_1'}=\cdots=h_{j_r}-h_{j_r'}
$$
denote the common difference. Swapping
$(j_1,\ldots,j_r)$ and $(j_1',\ldots,j_r')$ if necessary, we may assume
without loss of generality that $d>0$, and it follows that
$j_s>j_s'$ for each $s\in\{1,\ldots,r\}$.

For a fixed value of $k$, there are
$O(T^k)$ ways of choosing $\{j_1,\ldots,j_r\}$ and
$\{j_1',\ldots,j_r'\}$ of total cardinality $k$.
If $k=2r$ then for each choice of indices, there are at most
$H_T^{r+1}$ choices for the pair of vectors
$v=(h_{j_1},\ldots,h_{j_r})$, $v'=(h_{j_1'},\ldots,h_{j_r'})$,
since $v'$ is determined by $v$ and $d$.
If $k<2r$ then there are $2r-k$ pairs $(s,t)$ such that
$j_s=j_t'$; for each pair we have
$d=h_{j_t}-h_{j_t'}=h_{j_t}-h_{j_s}$, so that $h_{j_t}$ is determined by
$h_{j_s}$ and $d$.
Hence, in general there are at most $H_T^{k+1-r}$ choices for $v,v'$
for a given choice of indices.
Thus, in total we find
$$
\ll T^kH_T^{k+1-r}\ll T^{3k+2-2r}((\log{T})(\log{x}))^{k+1-r}
$$
choices for $v,v'$ for our fixed $k$.

Let us first suppose that $\ell_1>0$. Then
$n=p_i$ is an integer such that 
the $k+1$ distinct linear forms $n,n+h_{\ell_1},\ldots,n+h_{\ell_k}$
are all prime. By \cite[Ch.~II, Satz~4.2]{Prachar},
the number of such $n\le x$ is 
$$
\ll_k\frac{x}{(\log x)^{k+1}}\left(\frac{E}{\varphi(E)}\right)^k,
\quad\text{where }
E=\prod_{1\le s\le k}h_{\ell_s}
\cdot\prod_{1\le s<t\le k}\bigl(h_{\ell_t}-h_{\ell_s}\bigr).
$$
Since $h_{\ell_1},\ldots,h_{\ell_k}\le H_T\ll\log^2x$, we have
$\frac{E}{\varphi(E)}\ll\log\log\log{x}$.
Hence, the number of possibilities for $p_i$ is
\begin{align*}
&\ll\frac{T^{3k+2-2r}(\log{T})^{k+1-r}x(\log\log\log{x})^k}{(\log{x})^r}
\le\frac{T^{4r+2}(\log{T})^{r+1}x(\log\log\log{x})^{2r}}{(\log{x})^r}\\
&\ll\frac{x(\log\log\log{x})^{2r}}{(\log{x})(\log\log{x})^{3r+1}}
=o(\pi(x)).
\end{align*}

If $\ell_1=0$ then we lose one linear form, but gain from the fact that
$j_1'$ and $h_{j_1'}$ are fixed at $0$. This effectively replaces $k$
by $k-1$ in the above analysis, so we again find $o(\pi(x))$ possibilities
for $p_i$. Finally, summing over $k\in\{r+1,\ldots,2r\}$ concludes the proof
of the lemma.
\end{proof}

The following is Lemma~5.1 in \cite{FKL}. 
\begin{lemma}\label{sieve}
There is a positive constant $\delta$ so that the following holds.
Let $a_1,\ldots,a_k$ be positive integers, let $b_1,\ldots,b_k$ be
integers and let $\xi(p)$ be the number of solutions of
$\prod_{i=1}^k (a_i n+b_i) \equiv 0\pmod{p}$.
If $x\ge 10$, $\displaystyle{1\le k\le\delta\frac{\log x}{\log\log x}}$ and
$$
B:=\sum_p\left(\frac{k-\xi(p)}{p}\right)\log p\le\delta\log x,
$$
then the number of integers $n\le x$ for which
$a_1n+b_1,\ldots,a_kn+b_k$ are all prime and $>k$ is
\begin{equation}
\label{eq:TTT}
\ll\frac{2^kk!\SS{x}}{(\log x)^k}
\exp\!\left(O\!\left(\frac{kB+k^2\log\log{x}}{\log{x}}\right)\right),
\quad\text{where }
\SS=\prod_p\left(1-\frac{\xi(p)}{p}\right)\left(1-\frac1p\right)^{-k}.
\end{equation}
\end{lemma}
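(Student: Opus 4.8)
The plan is to interpret the quantity to be bounded as a sifted cardinality and to apply an upper-bound sieve of dimension $k$. Put $\mathcal{A}=\{\prod_{i=1}^k(a_in+b_i):1\le n\le x\}$ and $P(z)=\prod_{p<z}p$, with $z$ to be chosen near $x^{1/2}$. First I would dispose of the primality condition: if every $a_in+b_i$ is prime and larger than $k$, then, apart from the $O\!\left(k\,\pi(z)\right)$ values of $n$ for which some form equals a prime below $z$, the product $\prod_i(a_in+b_i)$ is coprime to $P(z)$. Hence the count in question is at most $S(\mathcal{A},z)=\#\{n\le x:(\prod_i(a_in+b_i),P(z))=1\}$ up to a negligible term. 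The local sifting density is $\xi(p)/p$, with $\xi(p)\le k$ for all $p\nmid\prod_i a_i$ (each factor being linear in $n$); thus the sequence has dimension $k$, the exceptional primes being absorbed into the singular series $\SS$.

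Next I would run Selberg's $\Lambda^2$ sieve at level $D$, giving $S(\mathcal{A},z)\le x/G(z)+R$, where $G(z)=\sum_{d<\sqrt D,\,d\mid P(z)}\mu^2(d)\prod_{p\mid d}\frac{\xi(p)}{p-\xi(p)}$ and $R\ll\sum_{d<D}\mu^2(d)3^{\omega(d)}|r_d|$ with $|r_d|\le\xi(d)\le k^{\omega(d)}$. For the main term I would bound $G(z)$ from below by $\sum_{d<\sqrt D}\mu^2(d)\prod_{p\mid d}\frac{\xi(p)}{p}$ and evaluate this squarefree sum by a Selberg--Delange/Mertens argument as $\asymp\frac{(\log\sqrt D)^k}{k!}\prod_p\bigl(1+\tfrac{\xi(p)}p\bigr)\bigl(1-\tfrac1p\bigr)^k$; here the factor $1/k!$ is the arithmetic shadow of $\sum_{d<y}\mu^2(d)k^{\omega(d)}/d\asymp(\log y)^k/k!$. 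Rewriting the Euler-product constant in terms of $\SS^{-1}$ (the discrepancy being absorbed into the exponential correction below) and choosing $\log\sqrt D\approx\tfrac12\log x$ to produce the factor $2^k$, this yields $x/G(z)\ll 2^kk!\,\SS\,x/(\log x)^k$.

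The remainder $R$ is where the hypotheses on $k$ and $B$ are spent. I would estimate $R\le\sum_{d<D}\mu^2(d)(3k)^{\omega(d)}\ll D(\log D)^{O(k)}=D\exp\!\bigl(O(k\log\log x)\bigr)$ and insist that it be dominated by the main term. Writing the optimal level as $D=x/\bigl((\log x)^{Ck}e^{CB}\bigr)$ keeps $R$ below the main term once $k\ll\log x/\log\log x$ and $B\ll\log x$, so that $z=\sqrt D$ stays within a small power of $x^{1/2}$. Feeding this choice back into the main term, the deficit $(\log z/\log\sqrt x)^k=\exp\!\bigl(-O(k\log(x/D)/\log x)\bigr)$ produces exactly the stated factor $\exp\!\bigl(O((kB+k^2\log\log x)/\log x)\bigr)$: the $k^2\log\log x$ reflects the $(\log x)^{O(k)}$ lowering of the level forced by $R$, while the $kB$ reflects the adjustment forced by the size of the singular series, controlled via partial summation from $B$ using $0\le k-\xi(p)$.

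I expect the main obstacle to be maintaining complete uniformity in $k$. Because the sieve dimension grows with $x$, the classical fixed-dimension asymptotics for $G(z)$ and for the divisor-type sums $\sum_d\mu^2(d)\xi(d)$ cannot be quoted verbatim; one must establish the $(\log y)^k/k!$ behaviour of the sieve sums with an error that is subexponential in $k$ and uniform, which is precisely what the hypothesis $k\ll\log x/\log\log x$ secures by keeping the unavoidable $(\log x)^{O(k)}$ losses below a small power of $x$. The delicate point is to track the constants well enough to obtain the clean exponent $O((kB+k^2\log\log x)/\log x)$ rather than a cruder $O(k)$; this requires the comparison between the truncated Euler product and $\SS$, and the optimisation of the level $D$, to be carried out simultaneously and with the $B$-weighted bounds throughout.
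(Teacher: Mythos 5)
The first thing to note is that the paper does not prove this lemma at all: it is quoted verbatim, with attribution, as Lemma~5.1 of \cite{FKL}, so there is no internal argument to compare yours against; your proposal has to be judged on its own merits. Your overall strategy --- a Selberg $\Lambda^2$ sieve of dimension $k$ with uniformity in $k$ tracked explicitly, the level $D$ lowered to tame the remainder, and $B$ exploited via partial summation to control Euler-product tails --- is the natural route and is close in spirit to how such uniform-in-dimension bounds are established; your accounting of the two terms in the exponential correction ($k^2\log\log x/\log x$ from the level deficit, $kB/\log x$ from singular-series tails) is also morally right.

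There is, however, a genuine quantitative gap, beyond the deferral (which you acknowledge) of the uniform-in-$k$ evaluation of the main-term sum $G$. The innocuous-looking step $G(z)\ge\sum_{d<\sqrt D}\mu^2(d)\prod_{p\mid d}\xi(p)/p$ discards exactly the factor the lemma cannot afford to lose: comparing complete sums, $\sum_{d\mid P(z)}\mu^2(d)\prod_{p\mid d}\frac{\xi(p)}{p-\xi(p)}=\prod_{p<z}\bigl(1-\xi(p)/p\bigr)^{-1}$ exceeds $\prod_{p<z}\bigl(1+\xi(p)/p\bigr)$ by $\prod_{p<z}\bigl(1-\xi(p)^2/p^2\bigr)^{-1}$. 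Equivalently, after your evaluation the main term carries $\prod_p(1+\xi(p)/p)^{-1}(1-1/p)^{-k}$ in place of $\SS$, and the ratio of the two is $\prod_p(1-\xi(p)^2/p^2)^{-1}$. For systems with $\xi(p)=p-1$ for all $p\le k$ --- dense admissible configurations, which is precisely the regime in which this paper applies the lemma --- that ratio is $e^{(1+o(1))k}$, whereas the permitted correction $\exp\bigl(O((kB+k^2\log\log x)/\log x)\bigr)$ is at most $e^{O(\delta k)}$, and is only $(\log x)^{O(1)}$ when, say, $k\asymp\sqrt{\log x}$ and $B\asymp k\log k$. No choice of the absolute constants closes an $e^{ck}$ gap against $(\log x)^{O(1)}$, so the claimed ``absorption into the exponential correction'' fails. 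The repair is to keep the weights $\xi(p)/(p-\xi(p))$ intact, so that the main term is $x\prod_{p<z}(1-\xi(p)/p)$ --- which is $\SS$ times a Mertens factor, up to $\exp(O(B/\log x))$ tail corrections --- and to put all the work into proving, uniformly in $k$, that truncating to $d<\sqrt D$ costs at most a factor of order $k!\,e^{\gamma k}$; that truncation loss, not the choice $\log\sqrt D\approx\frac12\log x$, is the true source of the $2^kk!$. As written, your sketch would at best yield the weaker bound with $e^{O(k)}$ in place of the stated correction (which, incidentally, would still suffice for this paper's application, but is not the lemma as stated).
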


We are now ready to go. As we said, we work with primes $p_i\le x$
that are not in $L$. The number of them is at least $\frac12\pi(x)$.
We discard all $p_i$ such that $p_{i+j}-p_i>H_j$ holds for some
$j=1,\ldots,T$. By Lemma~\ref{lem:1}, there are at most $\frac14\pi(x)$
such primes. Next, applying Lemma~\ref{lem:2}, by removing a further
$o(\pi(x))$ values of $p_i$, we may assume that as $j$ and $j'$ range
over $\{0,\ldots,T\}$, each non-zero difference $p_{i+j}-p_{i+j'}$
occurs with multiplicity at most $r-1$.  After this we are left with at
least $(\frac14-o(1))\pi(x)$ primes $p_i$.

Set $c=p_i-s_{p_i}$. By Theorem~\ref{theorem:1}(iv) and the definition of
$N_0$, we have
$$
0<c\le M:=\min\{g(x),N_0\}.
$$
Now consider $p_i,p_{i+1},\ldots,p_{i+T}$.
These are of the form $p_{i+j}=p_i+h_j$ for some $h_j\le H_j$, as in the proof
of Lemma~\ref{lem:2}.
On the other hand, $p_{i+T}-p_i\le H_T$, and since
$a_n=1$ for $p_i-1\le n\le p_i+H_T$,
we have $s_n=n-c$ for $p_i-2\le n\le p_{i+T}$.
Applying \eqref{eq:andiff} with $n=p_{i+j}-1$, we have
\begin{align*}
0&=a_{p_{i+j}}-a_{p_{i+j}-1}=
\bigl(\pi(p_{i+j})-\pi(p_{i+j}-1)\bigr)
-\bigl(\pi(s_{p_{i+j}-1})-\pi(s_{p_{i+j}-2})\bigr)\\
&=1-\bigl(\pi(p_{i+j}-1-c)-\pi(p_{i+j}-2-c)\bigr).
\end{align*}
Hence, $p_{i+j}-1-c=p_i+h_j-c-1$ is prime.

Therefore, $n=p_i$ is such that $n+h_j$ and $n+h_j-c-1$ are all primes
for $j=0,\ldots,T$. This is $2T+2$ linear forms, but they might not all
be distinct.
Let $m$ be the cardinality of the intersection
$$
\{h_j:0\le j\le T\}\cap\{h_j-c-1:0\le j\le T\}.
$$
Then there exist $j_0<j_1<\cdots<j_m$ and $j_0'<j_1'<\cdots<j_m'$
with
$$
h_{j_1}-h_{j_1'}=\cdots=h_{j_m}-h_{j_m'}=c+1,
$$
so that
$$
p_{i+j_1}-p_{i+j_1'}=\cdots=p_{i+j_m}-p_{i+j_m'}>0.
$$
By our construction, we must have $m<r$; in particular, there are
at least $2T+3-r$ distinct forms among the $n+h_j$ and $n+h_j-c-1$
for $j=0,\ldots,T$. Hence we may apply Lemma~\ref{sieve} for some
$k\in[2T+3-r,2T+2]\cap\ZZ$.

We need to check the hypothesis on $B$ and estimate some of the parameters
in \eqref{eq:TTT}. For $B$, we partition the primes into
$S_1\cup S_2\cup S_3$, where
$$
S_1=\bigl\{p:p\le\log^2x\text{ or }p\mid(c+1)\bigr\},
\quad S_2=\{p:\xi(p)<k\}\setminus S_1,
\quad S_3=\{p:\xi(p)=k\}\setminus S_1.
$$

Since $c\le x$, $c+1$ has $O(\log{x}/\log\log{x})$ prime factors
exceeding $\log^2x$. Hence,
\begin{align*}
\sum_{p\in S_1}\left(\frac{k-\xi(p)}{p}\right)\log p
&\le k\sum_{p\le\log^2x}\frac{\log p}{p}
+k\sum_{\substack{p\mid(c+1)\\p>\log^2x}}\frac{\log{p}}{p}\\
&\ll T\log\log{x}+\frac{T}{\log{x}}
\ll T\log\log{x}.
\end{align*}

For any prime $p\in S_2$, there is a double solution $n$ modulo $p$ to 
$$
\prod_{0\le j\le T}(n+h_j)
\cdot
\prod_{\substack{0\le j\le T\\h_j-c-1\notin\{h_0,\ldots,h_T\}}}
(n+h_j-c-1)\equiv 0\pmod p.
$$
If the double root comes from the forms $n+h_j$ for $j=0,\ldots,T$,
we get that $p$ divides $h_{j_2}-h_{j_1}$ for some $j_1,j_2$ with
$0\le j_1<j_2\le T$.
But this is impossible since $p>\log^2x$
and $h_j\le H_T<\log^2x$ for large $x$. The same
argument shows that the double solution cannot come from two factors
of the form $n+h_j-c-1$ for $j\in\{0,\ldots,T\}$. So any double root must
appear once from the first set of forms and once from the second, so
that $p$ divides $c+1+h_{j'}-h_j\ne0$ for some $j,j'\in\{0,\ldots,T\}$.
These numbers all lie in the interval $[c+1-H_T,c+1+H_T]$, and since $c\le x$,
each has $O(\log{x}/\log\log{x})$ prime factors exceeding $\log^2x$.
Thus,
$$
\#S_2\ll\frac{H_T\log{x}}{\log\log{x}}
\ll\log^3x.
$$
Moreover, writing $m=k-\xi(p)$, there exist $j_1<\cdots<j_m$,
$j_1'<\cdots<j_m'$ such that
$$
h_{j_1}-h_{j_1'}\equiv\cdots\equiv h_{j_m}-h_{j_m'}\equiv
c+1\pmod{p}.
$$
Since $p\nmid(c+1)$ and $2H_T+1<\log^2x$ for large $x$,
this implies that
$$
h_{j_1}-h_{j_1'}=\cdots=h_{j_m}-h_{j_m'}\ne0.
$$
Thus we have $m<r$, so that $\xi(p)\ge k+1-r$.
Therefore
$$
\sum_{p\in S_2}\left(\frac{k-\xi(p)}{p}\right)\log{p}
\le(r-1)\sum_{\log^2x<p\le O(\log^3x)}\frac{\log p}{p}\ll\log\log{x}.
$$
Finally, the primes in $S_3$ don't contribute to $B$.
Thus, the bound on $B$ holds, and in fact $B=O(T\log\log{x})$.

We now estimate \eqref{eq:TTT}. Since $B=O(T\log\log x)$,
the factor involving $\exp$ tends to $1$ as $x\to\infty$,
so it is smaller than $2$ for large
$x$. In the expression for $\SS$, the primes $p\in S_1$ contribute
at most
\begin{align*}
\left(\frac{c+1}{\varphi(c+1)}\right)^k
\prod_{p\le\log^2x}\left(1-\frac1p\right)^{-k}
&=O(\log\log{x})^k
\exp\Biggl(k\sum_{p\le\log^2x}\frac{O(1)}{p}\Biggr)\\
&=\exp\bigl(O(T\log\log\log x)\bigr).
\end{align*}
The contribution from $p\in S_2$ is at most
\begin{align*}
\prod_{p\in S_2}
\left(1-\frac{k+1-r}{p}\right)&\left(1-\frac1p\right)^{-k}
=\prod_{p\in S_2}\left(1-\frac{k+1-r}{p}\right)
\left(1+\frac{k}{p}+O\!\left(\frac{k^2}{p^2}\right)\right)\\
&=\prod_{p\in S_2}\left(1+O\!\left(\frac1p\right)\right)
=\exp\!\left(\sum_{p\in S_2}\frac{O(1)}{p}\right)=e^{O(1)}.
\end{align*}
Similarly, from $p\in S_3$ we get a contribution of
$$
\prod_{p\in S_3}\left(1-\frac{k}{p}\right)
\left(1-\frac1p\right)^k
=\exp\Biggl(k\sum_{p>\log^2x}\frac{O(1)}{p^2}\Biggr)
=\exp\!\left(O\!\left(\frac{k}{\log{x}}\right)\right)=e^{O(1)}.
$$
Thus, in total we have
$$
\SS=\exp\bigl(O(T\log\log\log x)\bigr).
$$

Applying Lemma~\ref{sieve}, the number of $n\le x$ of this form is
$$
\ll\frac{2^kk!x}{(\log{x})^k}\exp\bigl(O(T\log\log\log{x})\bigr).
$$
Since $2k\le4T+4<\log{x}$ for large $x$, this is largest when $k=2T+3-r$.
Using also that
$$
2^{2T+3-r}(2T+3-r)!=T^{2T+8-4r}e^{O(r\log{T})+O(T)}
=T^{2T+8-4r}e^{O(T)},
$$
we obtain
$$
\ll\frac{T^{2T+8-4r}\pi(x)}{(\log{x})^{2T+2-r}}
\exp\bigl(O(T\log\log\log{x})\bigr).
$$
This is for fixed $c,h_1,\ldots,h_T$. The number of choices for these
parameters is at most
$$
MH_1\cdots H_T=M(T!)^2H_1^T
\le MT^{2T}(\log{x})^T\exp\bigl(O(T\log\log\log x)\bigr).
$$
Thus, in total the number of possibilities is
$$
\ll\frac{M\pi(x)}{\exp\bigl((T+2-r)\log\bigl(\frac{\log{x}}{T^4}\bigr)\bigr)}
\exp\bigl(O(T\log\log\log x)\bigr).
$$

This must account for at least $(\frac14-o(1))\pi(x)$ primes, so
for sufficiently large $x$ we have
$$
M=\min\{g(x),N_0\}\gg\exp\!\left((T+2-r)\log\!\left(\frac{\log{x}}{T^4}\right)
-O(T\log\log\log{x})\right).
$$
If $g(x)\ll(\log{x})^C$ for some $C>1$, then taking $r=2$ and
$T=\lfloor{C}\rfloor+1$
results in a contradiction for sufficiently large $x$.
Hence, our hypothesis that
$2\pi(H_T+2)N_0\le\frac12\pi(x)$ must be false,
and it follows that $N_0\gg x(\log\log{x})/\log^2x$.

On the other hand, assuming that $N_0\ll x/\log^2x$ (and making no
hypothesis on $g(x)$), we can take
$T=\lfloor{r(\log{x})^{\frac{r-1}{4r+2}}/\log\log{x}}\rfloor$, and we conclude
that $N_0\ge\exp\bigl((\log{x})^{\frac{r-1}{4r+2}}\bigr)$ for all sufficiently
large $x$. Since this is true for every $r\ge2$, we have
$N_0\ge\exp\bigl((\log{x})^{\frac14-o(1)}\bigr)$.

\subsection*{Proof of (iii)}
Consider positive integers $M<N$, and let
$h=\min\{n-s_n:M\le n<N\}$. Then
\begin{align*}
s_N-s_M&=\sum_{n=M}^{N-1}a_{n+1}
=\sum_{n=M}^{N-1}\bigl(\pi(n+1)-\pi(s_n)\bigr)
\ge\sum_{n=M}^{N-1}\bigl(\pi(n+1)-\pi(n-h)\bigr)\\
&=\sum_{i=0}^h\bigl(\pi(N-i)-\pi(M-i)\bigr)
\ge(h+1)\bigl(\pi(N-h)-\pi(M)\bigr).
\end{align*}
By Theorem~\ref{theorem:1}(iv) and \cite{BHP}
we have $h\le g(M)\le M^{21/40}$ for sufficiently large $M$.
Choosing $N=\lceil{M+M^{7/12}}\rceil$,
by \cite{Heath-Brown} we have
$$
\pi(N-h)-\pi(M)=(1+o(1))\frac{N-M}{\log{M}}
\quad\text{as }M\to\infty.
$$
On the other hand,
$$
s_N-s_M\le N-(M-g(M))\le(1+o(1))(N-M),
$$
so that $h\le(1+o(1))\log{M}$.
Thus, every sufficiently large interval
$[M,M+M^{7/12})$ contains an integer $n$ with
$n-s_n\le(1+o(1))\log{n}$.

\subsection*{Proof of (iv)}
Let $N$ be a large natural number, and write $\{1,\ldots,N\}$ as a
disjoint union $I_1\cup\cdots\cup I_J$ of intervals $I_j$
such that $a_n$ is constant on each $I_j$ and $J$ is as small as
possible. Setting $m_j=\max I_j$ for $j\le J$, we have either
$m_j=N$ or $a_{m_j+1}\ne a_{m_j}$.
From \eqref{eq:andiff} we see that if $a_{n+1}\ne a_n$ then either
$\pi(n+1)\ne\pi(n)$ or $\pi(s_n)\ne\pi(s_{n-1})$.
Since both sequences $\pi(n)$ and $\pi(s_{n-1})$ are non-decreasing and
$s_n\le n$ for all $n$, it follows that
$$
J\le 1+\#\{n<N:a_{n+1}\ne a_n\}\le 1+2\pi(N).
$$
Thus, for at least one of the intervals, say $I_j=\{n_1,\ldots,n_2\}$,
we have
$$
\#I_j=n_2-n_1+1\ge\frac{N}{1+2\pi(N)}.
$$
By the prime number theorem, for any fixed $\varepsilon>0$ this exceeds
$(\frac12-\varepsilon)\log{N}$ for all sufficiently large $N$.

Suppose $A(b)=u/v$ is rational. Then, multiplying by $v(b-1)b^{n_1-1}$,
we obtain
$$
u(b-1)b^{n_1-1}=v(b-1)b^{n_1-1}\sum_{n=1}^\infty\frac{a_n}{b^n}.
$$
Let $c=a_{n_1}$. Since $a_n$ is constant for $n_1\le n\le n_2$, we have
\begin{align*}
u(b-1)b^{n_1-1}
&=v(b-1)\sum_{n=1}^{n_1-1}a_nb^{n_1-1-n}
+v(b-1)c\sum_{n=n_1}^{n_2}b^{n_1-1-n}
+v(b-1)\sum_{n\ge n_2+1}a_nb^{n_1-1-n}\\
&=v(b-1)\sum_{n=1}^{n_1-1}a_nb^{n_1-1-n}
+vc(1-b^{n_1-n_2-1})
+v(b-1)b^{n_1-n_2-1}\sum_{m\ge1}\frac{a_{n_2+m}}{b^m}.
\end{align*}
Hence,
\begin{equation}\label{eq:int}
vb^{n_1-n_2-1}\left((b-1)\sum_{m\ge1}\frac{a_{n_2+m}}{b^m}-c\right)
=u(b-1)b^{n_1-1}-v(b-1)\sum_{n=1}^{n_1-1}a_nb^{n_1-1-n}-vc
\end{equation}
is an integer.

On the other hand, since $0\le a_{n_2+m}\le c+m$ for
$m\ge1$, we have
$$
-c\le(b-1)\sum_{m\ge1}\frac{a_{n_2+m}}{b^m}-c
\le(b-1)\sum_{m\ge1}\frac{c+m}{b^m}-c=\frac{b}{b-1}.
$$
Hence the left-hand side of \eqref{eq:int} is bounded in modulus by
$$
\frac{(c+2)v}{b^{n_2-n_1+1}}
\le\frac{(c+2)v}{N^{(\frac12-\varepsilon)\log{b}}}
\ll\frac{v\sqrt{g(N)}}{N^{(\frac12-\varepsilon)\log{b}}}.
$$
By \cite{BHP}, we have $g(N)\le N^{21/40}$ for sufficiently large $N$.
Since $\log{b}\ge\log2>\frac{21}{40}$, for small enough $\varepsilon$ this
expression tends to $0$ as $N\to\infty$.
Since it has to be an integer,
it must be $0$ for all sufficiently large $N$.

Therefore,
$$
\sum_{m\ge1}\frac{a_{n_2+m}}{b^m}=\frac{c}{b-1}
=\sum_{m\ge1}\frac{c}{b^m}.
$$
By Theorem~\ref{theorem:1}(iii) there exists $n_3>n_2$ such that $a_{n_3}=0$.
Thus, we have
$$
\sum_{j=1}^\infty\frac{a_{n_3+j}}{b^{n_3+j}}
=\sum_{m\ge1}\frac{c}{b^m}-\sum_{m=1}^{n_3-n_2-1}\frac{a_{n_2+m}}{b^m}
=\sum_{m=1}^{n_3-n_2-1}\frac{c-a_{n_2+m}}{b^m}+\frac{cb^{n_2-n_3+1}}{b-1}.
$$
Multiplying both sides by $(b-1)b^{n_3-1}$, we see that the right-hand
side is an integer, so
$$
(b-1)\sum_{j=1}\frac{a_{n_3+j}}{b^{j+1}}\in\ZZ.
$$
On the other hand, we have $0\le a_{n_3+j}\le j$, and by
Theorem~\ref{theorem:1}(iii) both inequalities are strict for infinitely
many $j$. Hence,
$$
0<(b-1)\sum_{j\ge1}\frac{a_{n_3+j}}{b^{j+1}}
<(b-1)\sum_{j\ge1}\frac{j}{b^{j+1}}=1.
$$
This is a contradiction, so $A(b)$ must be irrational.

\section{Generalizations and suggestions for further work}
The sequence $a_n$ admits a vast generalization via sequences of the form
$$
a_f(n)=\pi(f(n))-\pi\!\left({\textstyle\sum_{k=1}^{n-1}a_f(k)}\right)
$$
for various functions $f$. For instance, choosing $f(n)=tn$ for a fixed
integer $t>0$, our proof of \eqref{eq:limits} can be generalized to show that
$$
\lim_{n\to\infty}\frac{a_f(n)}{n}=0
\quad\text{and}\quad
\lim_{n\to\infty}\frac{s_f(n)}{n}=t,
$$
where $s_f(n)=\sum_{k=1}^na_f(k)$ denotes the summatory function.
One can pose many of the same questions and conjectures for these
sequences.

Another possible generalization is to consider the same recurrence formula
with different initial conditions. However, it turns out that this
offers no increase in generality, in the sense that if
$\{a_n'\}_{n\ge1}$ is any sequence satisfying
$$
a_n'=\pi(n)-\pi\bigl(\textstyle\sum_{k=1}^{n-1}a_k'\bigr)
\quad\text{for }n>n_0
$$
for some $n_0\ge0$, then $a_n'=a_n$ for all sufficiently large $n$.
(The same proof shows that taking $f(n)=n+c$ for some $c\in\ZZ$ in the
above, we have $a_f(n)=a_{n+c}$ for sufficiently large $n$.)
To see this, let $s_n'$ be the summatory sequence of $a_n'$, and set
$d_n=s_n'-s_n$. Swapping the roles of $a_n$ and $a_n'$ if necessary,
we may assume without loss of generality that $d_{n_0}\ge0$.
Then, as in the proof of Theorem~\ref{theorem:1}(iii), we find that
$0\le d_{n+1}\le d_n$. It follows that $d_n$ is eventually
constant, i.e.\ there exist $d\ge0$ and $n_1\ge n_0$ such that
$s_n'=s_n+d$ for all $n\ge n_1$. In turn this implies that $a_n'=a_n$
for all $n>n_1$.

At the same time, there are several possible avenues for further
research on $\{a_n\}$. We conclude with a few speculative suggestions.
\begin{enumerate}
\item
Assuming Cram\'er's conjecture, by Theorem~\ref{theorem:1}(ii) we have
$$
\#\{n\le x:a_n\ne0\}\ge\frac{\sum_{n\le x}a_n}{\max_{n\le x}a_n}
\gg\frac{x\sqrt{\log\log{x}}}{\log{x}}.
$$
This could be improved with some information on higher moment statistics
of $a_n$. For instance, can one give a non-trivial upper bound for
$\sum_{n\le x}a_n^2$?

\item
It is easy to see that the difference sequence $a_{n+1}-a_n$ is
almost always $0$, so $a_n$ has many long constant runs. (This idea
was used in the proof of Theorem~\ref{theorem:2}(iv).) Assuming
either Conjecture~\ref{conj:1}(A) or Dickson's conjecture, one
can see that for any $k\ge0$ there are arbitrarily long runs of
$n$ with $a_n=k$. Unconditionally, by Theorem~\ref{theorem:2}(i)
this holds for at least one $k\in\{0,1\}$, and from the proof of
Theorem~\ref{theorem:1}(iii) we get arbitrarily long runs on which $a_n$
is both constant and arbitrarily large. Can one give an unconditional
proof of long constant runs for a specific value of $k$?

\item
The previous question admits many generalizations. For instance,
assuming Dickson's conjecture, one can see that there are arbitrarily
long arithmetic progressions $n,n+d,\ldots,n+kd$ such that $a_{n+jd}=j$
for $j=0,\ldots,k$. Can this be proved unconditionally?
\end{enumerate}

\section*{Acknowledgements}
Altug Alkan would like to thank Robert Israel, Remy Sigrist and Giovanni
Resta for their valuable computational assistance regarding OEIS
contributions A335294 and A335337.

\bibliographystyle{amsplain}
\bibliography{A335294}
\end{document}